\documentclass[12pt]{amsart}
\usepackage[utf8]{inputenc}
\usepackage{amssymb}
\usepackage{geometry}
\usepackage{amsmath,amscd}
\usepackage{graphicx}
\usepackage{color,hyperref}
\usepackage{tikz}
\usepackage{tikz-cd}
\usepackage{amsfonts}
\usetikzlibrary{matrix,arrows}
\usepackage{mathrsfs}
\usepackage{amssymb}
\usepackage{adjustbox}
\usepackage{physics}
\begin{document}
	
	\newcommand{\mf}{\mathfrak}
	\newcommand{\mc}{\mathcal}
	\newcommand{\mb}{\mathbf}
	
	\newcommand{\R}{\mathbf R}
	\newcommand{\C}{\mathbf C}
	\newcommand{\Q}{\mathbf Q}
	\newcommand{\Z}{\mathbf Z}
	\newcommand{\F}{\mathbf F}
	\newcommand{\N}{\mathbf N}

	\newcommand{\Fix}{\textnormal{Fix}}
	\newcommand{\End}{\textnormal{End}}
	\newcommand{\Frob}{\textnormal{Frob}}

	\newcommand{\sm}[4]{{
			\left(\begin{smallmatrix}{#1}&{#2}\\{#3}&{#4}\end{smallmatrix}\right)}}
	\newcommand{\sv}[2]{
		\genfrac(){0pt}{1}{#1}{#2}}

	\numberwithin{equation}{section}
	
	\theoremstyle{plain}
	\newtheorem{theorem}{Theorem}
	\newtheorem{lemma}[theorem]{Lemma}
	\newtheorem{proposition}[theorem]{Proposition}
	\newtheorem{corollary}[theorem]{Corollary}
	\newtheorem*{t2}{Theorem 2}
	\newtheorem*{t4}{Theorem 4}
	
	\theoremstyle{definition}
	\newtheorem{definition}[theorem]{Definition}
	\newtheorem{example}[theorem]{Example}
	\newtheorem{conjecture}[theorem]{Conjecture}
	\newtheorem{remark}[theorem]{Remark}

	\title{$\alpha$-Hölder Integrability Exponent }
	\author{SADIK TERZ\.{I}}
	\address{Department  of Mathematics,  Middle East  Technical University,  06800 Ankara,  Turkey }
\email{sterzi@metu.edu.tr}

\author{Ozcan Yazici}
\address{Department  of Mathematics,  Middle East  Technical University,  06800 Ankara,  Turkey }
\email{oyazici@metu.edu.tr}
    
	
	\subjclass[2010]{}
	
	\keywords{}
	
	\maketitle
	
	\begin{abstract} For a plurisubharmonic function $\varphi$ on $\Omega\subset \mathbb C^n$ we  defined $\alpha$-Hölder integrability exponent $c_{\alpha}(\varphi)$ and  obtained a lower bound in terms of Lelong number $\nu(\varphi)$  and intersection numbers $e_j(\varphi)$. This lower bound improves the earlier result of Kaufmann \cite{Kf} on the integrability of plurisubharmonic functions with respect to Monge-Amp\'ere measure with $\alpha$-Hölder continuous potentials. We also give a sharp lower bound for $\alpha$-Hölder integrability exponent  of certain type of plurisubharmonic functions.\\
		
\noindent \textbf{2020 Mathematics Subject Classification:} 32U05, 32U15, 32U25, 32W20.\\
\noindent \textbf{Keywords:} {Integrability exponent, Monge-Amp\'ere measure, Lelong numbers, Intersection numbers}. \\
 
	\end{abstract}

	\vskip 0.5cm
	\section{Introduction}
Let $\Omega$ be an open subset of $\mathbb{C}^n$ and $\varphi$ be a plurisubharmonic (psh) function on $\Omega.$ The $\textit{Lelong number}$ of $\varphi$ at $a\in \Omega$ can be defined as 
$$ \nu(\varphi,a): = \displaystyle \liminf_{z\rightarrow a}\dfrac{\varphi(z)}{\log\mid\mid z-a \mid\mid }. $$ Equivalently Lelong number of $\varphi$ at $a$ can be defined by $$\nu(\varphi,a) = \sup\{ \gamma: \varphi(z)\leq \gamma\log\mid\mid z-a\mid\mid  +   \text{ }O(1) \text{ as } z\rightarrow a \}.$$ The deep theorem of Siu \cite{Si} states that the function $z\mapsto \nu(\varphi,z)$ is upper semicontinuous with respect to the Zariski topology, that is, for every $c>0$, the set $\{ a\in \Omega : \nu(\varphi,a) \geq c\}$ is a closed analytic subvariety of $\Omega$. We also note that it is upper semicontinuous with respect to the usual Euclidean topology.
\par The operator $(dd^c- )^n$, acting on locally bounded psh functions, is called the $\textit{complex Monge-}$ $\textit{Amp\'ere operator}$ according to definition of Bedford-Taylor \cite{BT1}, \cite{BT2}. This definition can also be extended to psh functions with isolated or compactly supported poles by \cite{D1}. Let $PSH(\Omega)$ (resp. $PSH^-(\Omega)$) be the set of psh (resp. negative psh) functions on $\Omega$. Following Cegrell \cite{Cg}, one introduces certain classes $\mathcal{E}_0(\Omega), \mathcal{F}(\Omega) \text{ and }\mathcal{E}(\Omega)$ of negative psh functions on a bounded hyperconvex domain $\Omega$, and it is proved in \cite{Cg} that the class $\mathcal{E}(\Omega)$ is the biggest subset of $PSH^-(\Omega)$ on which the Monge-Amp\'ere operator is well defined. If one considers a general complex manifold $X$ and drops the negativity assumption on the psh functions involved, one can in fact extend the Monge-Amp\'ere operator to the class $$ \tilde{\mathcal{E}}(X)\subset PSH(X)$$ of psh functions which, on a neighborhood $\Omega$ of $x_0\in X$, are equal to a sum $u+v$ with $u\in \mathcal{E}(\Omega)$ and $v\in C^{\infty}(\Omega)$; this is the biggest subclass of functions of $PSH(X)$ on which the Monge-Amp\'ere operator is locally well defined.
\par In \cite{DK}, Demailly and Koll\'ar introduced the $\textit{integrability exponent}$ of $\varphi$ at $a$ as $$ c(\varphi,a)= \sup\{ c>0:e^{-2c\varphi} \text{ is } L^1 \text{ on a neighborhood of } a\}.$$ The integrability exponent $c(\varphi,a)$ is a another way of measuring the singularity of a psh function $\varphi$ at a point $a\in \Omega$. It was used  as a tool to study several types of algebraic and analytic objects, such as holomorphic functions, divisors, coherent ideal sheaves, positive closed currents in  \cite{DK}.

In \cite{DP}, for  $\varphi \in \tilde{\mathcal{E}}(X)$ and $a\in \Omega$,  Demailly and Ph\d{a}m defined  $\textit{intersection numbers}$ of $\varphi$ at $a$ as follows: 
$$ e_j(\varphi,a) = \int_{\{a\}}^{}(dd^c\varphi)^j\wedge (dd^c\log \mid\mid z-a \mid \mid)^{n-j},$$ which can be seen also as the Lelong number of $(dd^c\varphi)^j$ at $a$ for $j=0,1, \cdots ,n$.  We notice that the Lelong number $\nu(\varphi,a) $ is equal to the first intersection number $e_1(\varphi,a)$ of $\varphi$ at $a\in \Omega$  and $e_0(\varphi,a)=1.$ We also set $$\Sigma(\varphi,a):= \displaystyle\sum_{j=0}^{n-1}\dfrac{e_j(\varphi,a)}{e_{j+1}(\varphi,a)} $$ for $\varphi \in \tilde{\mathcal{E}}(\Omega)$ and $a\in \Omega$.
\par A classical theorem of Skoda \cite{Sk} states that if $\nu(\varphi,a)<2$, then $e^{-\varphi}$ is integrable in a neighborhood of a with respect to the Lebesgue measure. This theorem, on the one hand, translates as $c(\varphi,a)\geq \nu(\varphi,a)^{-1}$, and on the other hand, translates as $c(\varphi,a)\leq n \nu(\varphi,a)^{-1}$. In \cite{DP}, the authors find sharp lower bound for $c(\varphi,a)$ as $$ c(\varphi,a) \geq \Sigma(\varphi,a)$$ for a psh $\varphi$ and $a \in \Omega$. 
\par One can put forward similar questions for "integrability like exponents" when the Lebesgue measure is replaced by other type of measures. In this direction, one introduces a general Monge-Amp\'ere mass with a Hölder-continuous potential in below. This class of measures is important in many applications of pluripotential theory to complex geometry and dynamics and has been the topic of recent
research. The reader may consult \cite{DN} and \cite{Kl} for a complete characterization of these
measures and  \cite{DDG} for some of its properties. 

\par We say that a psh function $u:\Omega \longrightarrow [-\infty,\infty)$ is a $\alpha-\textit{Hölder continuous psh}$ function on $\Omega$ if for all $z,z' \in \Omega$, we have the following inequality $$ \mid u(z)-u(z')  \mid \leq C\mid \mid z-z'\mid \mid^{\alpha}$$ for some constant $C$ and for some exponent $0<\alpha \leq 1$. The measure $\mu = (dd^cu)^n$ for $\alpha$-Hölder psh function $u $ with Hölder exponent $0<\alpha\leq 1$ is called a $\textit{Monge-Amp\'ere measure with Hölder}$ $\textit{continuous potential } u$. 
\begin{definition}
    A measure $\mu$ on a complex manifold $X$ is called $\textit{locally moderate}$ if for any open set $U\subset X$, any compact set $K\subset U$, and any compact family $\mathcal{F}$ of psh functions on $U$, there are constants $\beta,C>0$ such that $$ \int_{K}^{}e^{-\beta\psi}d\mu \leq C$$ for every $\psi \in \mathcal{F}$.
\end{definition}
    We note that the measures of the form $\mu = (dd^cu)^n$ for an $\alpha$ -Hölder continuous psh $u$ are always locally moderate measures \cite{DNS}. We introduce the $\alpha-\textit{Hölder integrability exponent } c_{\alpha}(\varphi,a) $ of $\varphi$ at $a\in \Omega$  as follows 
    \[
   c_{\alpha}(\varphi,a) =\sup \left \{ c\geq 0 \ 
 \int_{K}^{}e^{-2c\varphi}(dd^cu)^n < \infty, \; \text{ for any}\; \alpha-\text{Hölder cont. psh } u \text{ on } \Omega\right \}.
\] 
Here $K$ is a compact neighborhood of $a$ in $\Omega$. Theorem 4.1 in \cite{Kf} implies that   $$ c_{\alpha}(\varphi,a)\geq \dfrac{\alpha}{\nu(\varphi, a)(\alpha + n(2-\alpha))}.$$

 Using the techniques that appeared in \cite{Kf} and \cite{DNS} and the main result in \cite{DP} we improved the lower bound for  $ c_{\alpha}(\varphi,a)$.


\begin{theorem}\label{main} Let $\varphi \in \tilde {\mathcal {E}}.$ Then  $$ c_{\alpha}(\varphi,a) \geq \dfrac{\alpha}{\alpha + n(2-\alpha)}\cdot \Sigma(\varphi,a).$$  

\end{theorem}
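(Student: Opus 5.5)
Our plan is to reduce the statement to a volume estimate for sublevel sets with respect to $\mu=(dd^cu)^n$, and then to compare $\mu$ with Lebesgue measure via Hölder regularization, following Kaufmann \cite{Kf} and \cite{DNS}. The Lebesgue input is the Demailly--Ph\d{a}m bound $c(\varphi,a)\ge \Sigma(\varphi,a)$ from \cite{DP}. Fix $c<\Sigma(\varphi,a)$. Since $e^{-2c\varphi}\in L^1$ near $a$, Chebyshev's inequality gives a Lebesgue estimate $\mathrm{Vol}(\{\varphi<-t\}\cap K)\le C e^{-2ct}$ for $t\ge 0$, with $K$ a small compact neighbourhood of $a$.

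The main step is to transfer this to $\mu$. We approximate $u$ by $u_\delta=u*\rho_\delta$ (plus a small correction to keep plurisubharmonicity on a slightly smaller set). Hölder continuity gives $\|u-u_\delta\|_\infty\le C\delta^\alpha$ and $\|D^2u_\delta\|_\infty\le C\delta^{\alpha-2}$. For a set $E=\{\varphi<-t\}\cap K$, we write $\mu(E)\le \int \chi\,(dd^cu)^n$ with a cutoff $\chi$ adapted to $E$ and replace $u$ by $u_\delta$ one factor at a time. Each replacement costs, after integration by parts, an error controlled by $\|u-u_\delta\|_\infty\approx\delta^\alpha$. The fully smoothed term is bounded by $\delta^{n(\alpha-2)}\mathrm{Vol}(E)$. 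Since $E$ is not smooth, we will in practice work with the functions $\max(\varphi,-t)$ and capacity-type estimates, or directly with Kaufmann's Theorem 4.1 machinery, in which one pairs $e^{-2c\varphi}$ against $dd^c$ of Hölder potentials.

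Carrying this out should yield
\[
\mu(\{\varphi<-t\}\cap K)\lesssim \delta^{\alpha}+\delta^{n(\alpha-2)}e^{-2ct}.
\]
Optimizing with $\delta^{\alpha+n(2-\alpha)}=e^{-2ct}$ gives
\[
\mu(\{\varphi<-t\}\cap K)\lesssim \exp\!\Big(-\tfrac{2c\alpha}{\alpha+n(2-\alpha)}\,t\Big).
\]
Integrating the layer-cake formula $\int_K e^{-2c'\varphi}d\mu=\int \mu(\{-2c'\varphi>s\})e^s\,ds$ then shows integrability for every $c'<\frac{\alpha}{\alpha+n(2-\alpha)}c$. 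Letting $c\uparrow\Sigma(\varphi,a)$ gives the theorem.

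The main obstacle is the transfer step: obtaining an error term of exactly order $\delta^\alpha$ on a sublevel set of $\varphi$, rather than on a ball. Kaufmann's proof handled this with the Lelong number, essentially via the case $\varphi\approx\nu\log|z|$. Our first attempt will be to check that his argument uses only the Lebesgue integrability of $e^{-2c\varphi}$, that is, the exponent $c(\varphi,a)$, and not the specific form $1/\nu$. If so, we will simply substitute the Demailly--Ph\d{a}m exponent. Otherwise we will redo the comparison with the truncations $\max(\varphi,-t)$, which are bounded, so that Bedford--Taylor integration by parts applies.
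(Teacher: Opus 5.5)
Your plan is essentially the paper's proof. The paper also gets the Lebesgue sublevel bound $\lambda_{2n}(K\cap\{\varphi<-N\})\lesssim e^{-\gamma N}$ for $\gamma<2\Sigma(\varphi,a)$ from the Demailly--Ph\d{a}m bound (Lemma~\ref{l2}). It then runs the Kaufmann/DNS regularization on the level sets $\{-N\le\varphi<-N+1\}$, using the cutoffs $\psi_N=\max(\varphi,-N+1)-\max(\varphi,-N)$ and trading $u$ for $u_\epsilon$ with $\epsilon=e^{-(1/\alpha+c)N}$, which is your balancing of $\delta^\alpha$ against $\delta^{n(\alpha-2)}\,\mathrm{Vol}$; it finishes by summing over $N$ (your layer-cake step) and rescaling $\varphi$ (Corollary~\ref{in1}).
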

 
 In order to obtain even better lower bound  for $\alpha$-Hölder integrability exponents we want to use directly complex Monge-Amp\'ere masses of level sets $\{ \varphi < -N \}$ instead of Lebesgue masses of that sets.  In this direction, we will use partial refined Lelong numbers defined in \cite[Section 7]{Ki} and restrict ourselves to a special type of psh functions, i.e., psh functions bounded below by a toric psh function, so that we may majorize the level sets $\{\varphi <-N\}$ by balls $B(0,r)$ where $r$ depends on the natural number $N$. 

We call a psh function toric if $\varphi(z_1,...,z_n)=\varphi(|z_1|,\dots, |z_n|)$ depends only on $|z_j|$ for all $j.$ 
 
\par In \cite[Section 7]{Ki}, the author defines $\textit{partial refined Lelong number}$ $\nu_{f,k}$ for a psh function $f$ in $\Omega$ as follows:
$$ \nu_{f,k}(x,y_1,\cdots,y_k)= \displaystyle\lim_{t\rightarrow -\infty} \dfrac{ V_{f,k}(x,ty_1,\cdots ,ty_k) }{t}  $$
where $ x\in \Omega, (y_1,\cdots,y_k) \in \mathbb{R}_+^k$ and  $$V_{f,k}(x,y_1,\cdots ,y_k) =  \displaystyle \sup\{f(x_1 + z_1, \cdots , x_k+ z_k, x_{k+1},\cdots, x_n): |z_j|=e^{y_j} \}.$$  
\par In the proof of Proposition \ref{toric bdd}, we need a small modification of Kiselman partial refined lelong numbers and hence we define \textit{partial like refined Lelong numbers} in the following setup: \\
Let $f$ be a psh function in $\Omega \subset \mathbb{C}^n$ and $J=\{j_1,\cdots, j_k\}$ be any subset of $\{1,2,\cdots , n\}$. We define a new psh function $f_J(z)$ from the old one by setting 
$$  f_J(z_{j_1},\cdots, z_{j_k})= f(z_1,\cdots, z_n)|_{\Omega_J}$$ 
where $\Omega_J =\Omega \cap \{z_l=x_l: l\notin J \}$.

\begin{definition}
The number $$ \nu_{f,J}(x,y_J) = \displaystyle\lim_{t\rightarrow -\infty} \dfrac{V_{f,J}(x,ty_J)}{t}$$ is called partial like refined Lelong number of $f$ where $x\in \Omega$, $y_J=(y_{j_1},\cdots,y_{j_k})$ and $V_{f,J}(x,y_J)= \text{sup}\{ f_J( x_{j_1} +z_{j_1},\cdots, x_{j_k} + z_{j_k}) : |z_{j_l}|=e^{y_{j_l}} \text{ for } l=1,\cdots,k\}$.
\end{definition}

For simplicity, we will denote $\nu_{f,J}(x,y_J)$ by $\nu_{f,j}(x,y_j)$ when $J=\{j\}$ is the one point set. By using this type of Lelong numbers, we obtain the following integrability result for the special family of psh functions:

\begin{theorem} \label{main2}
     Let $f$ be a toric psh function in the domain $\Omega \subset \mathbb{C}^n$ with an isolated singularity at $0\in \Omega$. Let $\varphi$ be a psh function on $\Omega$ such that $\varphi(z) \geq  f(z)$ on a compact neighborhood $K$ of $0$ and $u$ be an $\alpha$-Hölder psh function for some $\alpha$ with $0<\alpha \leq 1$. Suppose that $\nu_{f,j}(0,1)>0$ for all $j:1\dots n.$ If we set $$\nu = \displaystyle\max_j(\nu_{f,j}(0,1))$$ and assume that $\nu < \alpha n$, then the integral  
$$   \displaystyle\int\limits_{K}^{}e^{-\varphi}(dd^cu)^n $$ is finite.

\end{theorem}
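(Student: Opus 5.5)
The plan is to bound the $(dd^cu)^n$-mass of the sublevel sets $\{\varphi<-N\}\cap K$ and then sum over dyadic-type layers. The inclusion $\{\varphi<-N\}\cap K\subset\{f<-N\}$ reduces everything to the toric function $f$. For $f$ we aim to show that $\{f<-N\}$ near $0$ is contained in a polydisc of radius $C e^{-N/(\nu+\varepsilon)}$. Combined with a Hölder mass estimate $\mu(B(a,r))\le C r^{n\alpha}$ for $\mu=(dd^cu)^n$, this will make the layer sum converge exactly when $\nu<\alpha n$.

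\textbf{Step 1 (geometry of the level sets of $f$).} Write $f(z)=g(\log|z_1|,\dots,\log|z_n|)$. Since $f$ is toric and psh, $g$ is convex. Moreover, for fixed other variables, $z_j\mapsto f$ is a radial subharmonic function, so $g$ is nondecreasing in each variable. Hence for $z$ near $0$ and every $j$,
$$f(z)\ \ge\ f(0,\dots,0,z_j,0,\dots,0)=g_j(\log|z_j|),$$
where $g_j$ is the convex function of one real variable describing the restriction $f_{\{j\}}$. This restriction is finite off the origin because the singularity of $f$ is isolated. By the definition of the partial like refined Lelong number, $g_j(t)/t\to\nu_{f,j}(0,1)=:\nu_j$ as $t\to-\infty$. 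By convexity the slopes of $g_j$ decrease to $\nu_j$, so for any $\varepsilon>0$ there is $C_\varepsilon$ with
$$g_j(t)\ge(\nu_j+\varepsilon)t-C_\varepsilon\ge(\nu+\varepsilon)t-C_\varepsilon \qquad\text{for } t\le 0.$$
Consequently $f(z)<-N$ forces $\log|z_j|<(-N+C_\varepsilon)/(\nu+\varepsilon)$ for every $j$, that is,
$$\{\varphi<-N\}\cap K\subset \Delta\big(0,\,C'e^{-N/(\nu+\varepsilon)}\big).$$
We choose $\varepsilon$ so small that $\nu+\varepsilon<\alpha n$, which is possible because the inequality $\nu<\alpha n$ is strict. (The hypothesis $\nu_j>0$ is only needed so that these sets genuinely shrink.)

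\textbf{Step 2 (mass of small balls).} We show $\mu(B(a,r))\le C r^{n\alpha}$ for $a$ in a compact set and $r$ small. Put $v=u-u(a)$; by the Hölder condition, $|v|\le C r^\alpha$ on $B(a,2r)$. Rescale by $z=a+r w$ and apply the Chern--Levine--Nirenberg inequality on $B(0,2)$ to the rescaled function $v(a+rw)$. Its sup norm is at most $Cr^{\alpha}$, and the Monge--Ampère mass is invariant under the rescaling, so
$$\mu(B(a,r))=\int_{B(0,1)}(dd^c v(a+r\cdot))^n\le C\|v\|_{L^\infty(B(a,2r))}^n\le C r^{n\alpha}.$$
A polydisc of radius $\rho$ is contained in the ball of radius $\sqrt n\,\rho$, so the same estimate holds for polydiscs.

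\textbf{Step 3 (summation).} We may assume $\varphi<0$ on $K$, since subtracting a constant only changes the integral by a finite factor. Then
$$\int_K e^{-\varphi}\,d\mu\le \sum_{N\ge0}e^{N+1}\mu(\{\varphi<-N\}\cap K)\le C\sum_N e^{N}e^{-Nn\alpha/(\nu+\varepsilon)}<\infty,$$
because $n\alpha/(\nu+\varepsilon)>1$.

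\textbf{Main obstacle.} I expect the delicate point to be Step 1: extracting a uniform lower bound $g_j(t)\ge(\nu_j+\varepsilon)t-C$ from the limit defining $\nu_{f,j}(0,1)$, and justifying that monotonicity in each variable lets us compare $f$ with its restriction to the coordinate axes. This is where the toric hypothesis and the isolated singularity are used; without them the level sets need not be contained in such polydiscs. The Hölder mass estimate of Step 2 is standard, as in \cite{Kf} and \cite{DNS}.
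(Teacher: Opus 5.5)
Your proposal is correct and follows essentially the paper's route: monotonicity of the toric $f$ in each $|z_j|$ to compare it with its restrictions to the coordinate axes, containment of $\{\varphi<-N\}\cap K$ in a ball (you use a polydisc) of radius $\approx e^{-N/\nu}$, the rescaled Chern--Levine--Nirenberg bound $(dd^cu)^n(B(0,r))\lesssim r^{n\alpha}$, and summation over the layers of $\varphi$. The one real difference is your $\varepsilon$-loss in Step 1, and it is needed: the paper's Proposition asserts $f_j(z_j)\ge\nu_{f,j}(0,1)\log|z_j|-C_j$ with the exact number, but $g_j(t)/t\downarrow\nu_j$ only gives $g_j(t)\le\nu_j t$, and the lower bound fails for e.g.\ $g_j(t)=t-\sqrt{-t}$, so your version, with the strict inequality $\nu<\alpha n$ absorbing $\varepsilon$, is the sound way to run the argument.
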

The following example shows that the condition on $\nu$ in Theorem \ref{main2} is sharp.  

\begin{example}\label{ex5} Let $0<\alpha<1$ and $u(z)=|z_1|^{\alpha}+\dots +|z_n|^{\alpha}$ and $\varphi(z)=c\log||z||.$ Then $(dd^c u)^n=n!\left( \frac{\alpha}{2} \right)^{2n}|z_1|^{\alpha-2}\dots |z_n|^{\alpha-2}\lambda_{2n}$ and $$\displaystyle I=\int_{B(0,\epsilon)}e^{-\varphi}(dd^c u)^n= \int_{B(0,\epsilon)}\frac{|z_1|^{\alpha-2}\cdots|z_n|^{\alpha-2}}{(|z_1|^2+\cdots+|z_n|^2)^{c/2}}\,dz_1\cdots dz_n .$$ 
Using polar coordinates with $r=\Big(\sum_{i=1}^n|z_i|^2\Big)^{1/2},\; z_i=r\,\omega_i$, we get that $I\approx\int_0^{\epsilon} r^{\,n\alpha-c-1}\,dr.$
Thus the integral $I$ converges if and only if $\nu=c<n\alpha.$
\end{example}

\par The last part of this manuscript is devoted to find relaxed integrability conditions of complex Monge-Amp\'ere measures for a specific class of Hölder continuous psh functions. We will consider the Dirichlet problem  
\[ \text{Dir}(\Omega, \psi,gd\mu): \begin{cases} 
      u \in \text{psh}(\Omega)\cap C(\bar{\Omega}) & , \\
      (dd^cu)^n=gd\mu \text{ in } \Omega &, \\
      u=\psi \text{ on } \partial\Omega
   \end{cases} \]
where $\mu$ is finite Borel measure on a bounded domain 
$\Omega \subset \mathbb{C}^n$ and $g \in L^p(\Omega, \mu)$ for $p>1$. When the boundary function $\psi$ belongs to $C^{0,\alpha}(\partial\Omega)$, the unique solution to complex Monge-Amp\'ere equation becomes $\alpha'$-Hölder continuous psh function $u$ in  $\bar{\Omega}$. In this direction, we restrict ourselves to cases where $\mu \in \mathcal{F}^1_{2n-2 + \epsilon}(B)$  is the Hausdorff-Riesz measure of order $2n-2+\epsilon$ \cite[Section 2]{Z} for $0<\epsilon \leq 2$ and produce the result Theorem \ref{ex}.

  \section{Proof of Theorem \ref{main}  } 

We observe that  for any psh   $\varphi$ satisfying $\displaystyle\frac{1}{\Sigma(\varphi, a)}<2$ the lower bound  $\Sigma(\varphi,a)\leq c(\varphi,a)$ in \cite{DP} implies that  $\displaystyle\int_{\{a\}}^{}e^{-\varphi}<\infty.$
    \begin{lemma}{\label{l2}}
        Let $\varphi$ be a psh function in a domain $\Omega \subset \mathbb{C}^n$ and $a\in\Omega$. Then for every $\gamma<2\Sigma(\varphi,a)$, there is a constant 
        $C_{\gamma}=C_{\gamma}(\varphi, \Omega, a)$ and a compact set $K$ near $a$ such that 
        $$ \lambda_{2n}(K\cap \{\varphi\leq M \})\leq C_{\gamma}e^{-\gamma M}$$
        where $\lambda_{2n} $ denotes the Lebesgue measure in $\mathbb{C}^n$.
    \end{lemma}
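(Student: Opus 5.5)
The plan is to deduce the estimate from the integrability statement of \cite{DP}, namely $c(\varphi,a)\geq \Sigma(\varphi,a)$, combined with a Chebyshev-type inequality. The reduction is as follows. Fix $\gamma<2\Sigma(\varphi,a)$ and put $c=\gamma/2$, so that $c<\Sigma(\varphi,a)\leq c(\varphi,a)$. By the definition of the integrability exponent, $e^{-2c\varphi}=e^{-\gamma\varphi}$ is integrable with respect to Lebesgue measure on some neighborhood $U$ of $a$. We then choose a compact neighborhood $K\subset U$ of $a$ and set
\[
C_\gamma:=\int_K e^{-\gamma\varphi}\,d\lambda_{2n}<\infty .
\]

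The key step is Chebyshev's inequality. On the set $K\cap\{\varphi\leq M\}$ we have $-\gamma\varphi\geq -\gamma M$ (here we use $\gamma>0$, which we may assume: for $\gamma\le 0$ the claim is trivial after shrinking $K$ to be bounded, since then $e^{-\gamma M}$ is bounded below for large $M$, and small $M$ is irrelevant because $\varphi$ is bounded above on $K$). Hence $e^{-\gamma\varphi}\geq e^{-\gamma M}$ there, and
\[
e^{-\gamma M}\,\lambda_{2n}(K\cap\{\varphi\leq M\})\leq \int_{K\cap\{\varphi\leq M\}} e^{-\gamma\varphi}\,d\lambda_{2n}\leq C_\gamma .
\]
Multiplying by $e^{\gamma M}$ gives $\lambda_{2n}(K\cap\{\varphi\le M\})\le C_\gamma e^{-\gamma M}$, which is the assertion. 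The estimate is only informative as $M\to-\infty$, and it holds uniformly in $M$.

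I expect the main obstacle to be a bookkeeping issue rather than an analytic one: making sure the normalizations match. The definition of $c(\varphi,a)$ uses $e^{-2c\varphi}$, and that is exactly why the admissible range is $\gamma<2\Sigma$. We also need the degenerate cases to make sense, since $\Sigma(\varphi,a)$ involves quotients $e_j/e_{j+1}$, which we interpret as $+\infty$ when $e_{j+1}=0$. In that case every $\gamma$ is allowed, and the argument is unchanged because $c(\varphi,a)=+\infty$ as well, by the \cite{DP} inequality. Finally, we must use that $\varphi\in\tilde{\mathcal E}$ near $a$ so that the intersection numbers, and hence the result of \cite{DP}, are available.
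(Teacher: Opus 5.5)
Your route is the paper's. You use the Demailly--Ph\d{a}m bound $c(\varphi,a)\ge\Sigma(\varphi,a)>\gamma/2$ to get a compact neighborhood $K$ of $a$ with $C_\gamma=\int_K e^{-\gamma\varphi}\,d\lambda_{2n}<\infty$, and then apply a Chebyshev estimate. The problem is the last line. From
\[
e^{-\gamma M}\,\lambda_{2n}(K\cap\{\varphi\le M\})\le C_\gamma
\]
multiplying by $e^{\gamma M}$ gives $\lambda_{2n}(K\cap\{\varphi\le M\})\le C_\gamma e^{\gamma M}$, not $C_\gamma e^{-\gamma M}$. The inequality you claim is in fact false for $\gamma>0$. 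Since $\varphi$ is upper semicontinuous, it is bounded above on the compact set $K$. So for $M\ge\sup_K\varphi$ the left-hand side equals $\lambda_{2n}(K)>0$, whereas $C_\gamma e^{-\gamma M}\to0$ as $M\to+\infty$. The statement as printed therefore has a sign typo, and any proof that arrives at it must contain an error. Your own comment that the estimate is ``only informative as $M\to-\infty$'' fits the correct bound $C_\gamma e^{\gamma M}$, not the one you wrote.

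The intended statement is the following; it is what the paper proves, and what is used (with $M=N-1$) in the volume estimate in the proof of Theorem~\ref{t4}:
\[
\lambda_{2n}(K\cap\{\varphi\le -M\})\le C_\gamma e^{-\gamma M}.
\]
The paper gets it by observing that $1\le e^{\gamma(-\varphi-M)}$ on $K\cap\{\varphi\le -M\}$ and integrating over $K$. This is exactly your Chebyshev step with $M$ replaced by $-M$. So the repair is simply to correct the sublevel set and redo the final multiplication.

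You should also drop the aside on $\gamma\le 0$, which is muddled. For $\gamma<0$ the right-hand side tends to $0$ as $M\to-\infty$, while the upper bound on $\varphi$ over $K$ only controls the opposite regime $M\to+\infty$. The lemma is meaningful, and is only used, for $0<\gamma<2\Sigma(\varphi,a)$. Your observation that one needs $\varphi\in\tilde{\mathcal E}$ near $a$ for $\Sigma(\varphi,a)$ and the Demailly--Ph\d{a}m inequality to make sense is a fair correction to the hypotheses.
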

    \begin{proof}
By \cite{DP}, $c(\varphi,a)\geq \Sigma(\varphi,a)>\frac{\gamma}{2}$. Thus there is a compact neighborhood $K$ of $a$ such that  $\displaystyle\int_{K}^{}e^{-\gamma\varphi}<\infty$. We now set $ C_{\gamma}=C_{\gamma}(\varphi, \Omega , a)=  \displaystyle\int_{K}^{}e^{-\gamma\varphi}< \infty$. On the set $K\cap \{\varphi \leq -M\}$ $1\leq e^{\gamma(-\varphi-M)}$. We obtain that 
$$ \lambda_{2n}(K\cap \{\varphi \leq -M\}) \leq \displaystyle\int_{K}^{}1 \leq \displaystyle\int_{K}^{}e^{\gamma(-\varphi-M)}=C_{\gamma}e^{-\gamma M}.$$

    \end{proof}

Now we prove our first  main result which implies Theorem \ref{main}.    
    \begin{theorem}\label{t4}
        Let $0< \alpha \leq 1$, $u$ be an $\alpha$-Hölder continuous psh function in $\Omega \subset \mathbb{C}^n$,  $\varphi\in \tilde{\mathcal{E}}(\Omega)$ and $a\in \Omega$. If  $\dfrac{1}{\Sigma(\varphi,a)} < \dfrac{2\alpha}{\alpha + n(2-\alpha)}$ then there is a neighborhood $K\subset \Omega$ of $a$ such that
        $$ \displaystyle\int_{K}^{}e^{-\varphi}(dd^cu)^n< \infty.$$ 
        \end{theorem}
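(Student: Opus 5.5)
We plan to follow the strategy of Kaufmann \cite{Kf}, which goes back to \cite{DNS}, and to feed in Lemma \ref{l2} in place of the Skoda-type volume estimate. Replacing $\varphi$ by $\varphi - C$ changes nothing, so we may assume $\varphi<0$ on a neighborhood of $a$. We write the integral as a sum over level sets:
$$\int_K e^{-\varphi}(dd^cu)^n \le \sum_{M\ge 0} e^{M+1}\,\mu\big(K\cap\{-M-1<\varphi\le -M\}\big)\le \sum_{M\ge0} e^{M+1}\mu(K\cap\{\varphi\le -M\}),$$
where $\mu=(dd^cu)^n$. It therefore suffices to prove $\mu(K\cap\{\varphi\le -M\})\le C e^{-\delta M}$ for some $\delta>1$.

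The key step transfers the Lebesgue volume bound of Lemma \ref{l2} to a $\mu$-mass bound. For a Borel set $E$ in a small ball, we compare $\mu(E)$ with $\lambda_{2n}(E)^{\kappa}$ for $\kappa=\frac{\alpha}{\alpha+n(2-\alpha)}$. First regularize $u$ by convolution, $u_\epsilon=u*\rho_\epsilon$. Hölder continuity gives $\|u-u_\epsilon\|_\infty\le C\epsilon^\alpha$ and $\|D^2u_\epsilon\|\le C\epsilon^{\alpha-2}$, hence $(dd^cu_\epsilon)^n\le C\epsilon^{n(\alpha-2)}\lambda_{2n}$. Next take a cutoff $\chi$ and pass from $(dd^c u_\epsilon)^n$ to $(dd^cu)^n$ by integrating by parts, replacing one factor at a time via $dd^c(u-u_\epsilon)$. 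This yields, for a sublevel set $E=\{\varphi\le -M\}$, the inequality
$$\mu(E)\lesssim \epsilon^{n(\alpha-2)}\lambda_{2n}(E')+\epsilon^{\alpha}\cdot(\text{mass terms}),$$
where $E'$ is a slight enlargement of $E$. To make the indicator function regular enough for the integration by parts, we work with the psh function $\max(\varphi,-M)$, or equivalently test functions $\max(\varphi+M,-1)$ as in \cite{Kf}, so that the relevant sets remain sublevel sets of $\varphi$ at a comparable level.

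With this comparison, Lemma \ref{l2} gives $\lambda_{2n}(\{\varphi\le -M\})\le C_\gamma e^{-\gamma M}$ for every $\gamma<2\Sigma(\varphi,a)$. Optimizing with $\epsilon^{\alpha}\approx \epsilon^{n(\alpha-2)}e^{-\gamma M}$, that is, $\epsilon=e^{-\gamma M/(\alpha+n(2-\alpha))}$, gives $\mu(\{\varphi\le -M\})\lesssim e^{-\gamma\kappa M}$. The hypothesis $1/\Sigma<2\kappa$ means $2\Sigma\kappa>1$, so we can choose $\gamma<2\Sigma$ with $\gamma\kappa>1$, and the series converges.

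The main obstacle is the comparison step. The error term of order $\epsilon^\alpha$ must be controlled by a \emph{uniform} mass. In \cite{Kf} and \cite{DNS} this is handled with Chern–Levine–Nirenberg type estimates, applied to the truncations $\max(\varphi,-M)$ rescaled by $1/M$. Doing so might cost a factor polynomial in $M$, which is harmless against the exponential decay. We will simply reproduce Kaufmann's Theorem 4.1 argument, observing that it uses the Lelong number only through the volume estimate $\lambda_{2n}(\{\varphi\le -M\})\le Ce^{-\gamma M}$. That estimate is exactly what Lemma \ref{l2} supplies with the improved range $\gamma<2\Sigma(\varphi,a)$.
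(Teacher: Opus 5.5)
Your proposal is correct and follows essentially the same route as the paper. Both decompose into level sets, use the volume bound of Lemma \ref{l2} in place of Skoda's estimate, and run the Kaufmann/DNS regularization-plus-integration-by-parts argument with truncations $\max(\varphi,-N)$, a uniform mass bound for the $\|u-u_\epsilon\|_\infty$ error terms, and $\|u_\epsilon\|_{\mathcal C^2}^n$ times the volume for the main term. The only difference is cosmetic: you optimize $\epsilon=e^{-\gamma M/(\alpha+n(2-\alpha))}$, whereas the paper takes $\epsilon=e^{-(1/\alpha+c)N}$ with $0<c<\delta/(n(2-\alpha))$, and both choices give a summable series.
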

    \begin{proof}

We assume that $a=0$ and $\varphi$ is negative. We also set $\omega = dd^c\mid \mid z \mid \mid ^2$. For $N>0$, we define $\varphi_N= \text{ max}\{\varphi, -N\}$ and $\psi_N= \varphi_{N-1}-\varphi_N$. We note that $0\leq \psi_N \leq 1$, $\psi_N$ is supported in $\{\varphi < -N+1\}$, and $\psi_N \equiv 1$ in $ \{\varphi < -N \}$. We have that 

\begin{align}  \label{(1.1)}
  \displaystyle\int\limits_{}^{}e^{-\varphi}(dd^cu)^n &= \displaystyle\sum_{N=0}^{\infty}\int\limits_{\{ -N\leq \varphi < -N+1\}}^{}e^{-\varphi}(dd^cu)^n \nonumber \\ & \leq\displaystyle\sum_{N=0}^{\infty} e^N \int\limits_{\{ -N\leq \varphi < -N+1\}}^{}(dd^cu)^n \nonumber \\ & \leq  \displaystyle\sum_{N=0}^{\infty} e^N \int\limits_{\{ -N\leq \varphi < -N+1\}}^{}\psi_{N-1}(dd^cu)^n.
  \end{align} 

By the assumption that $\dfrac{1}{\Sigma(\varphi,0)} < \dfrac{2\alpha}{\alpha + n(2-\alpha)}$ we have $\dfrac{1}{\Sigma(\varphi,0)} < \dfrac{2\alpha}{\alpha + n(2-\alpha)} - 2\sigma$ for some $\sigma >0$. Hence

    $$ \dfrac{\alpha +n(2-\alpha)}{\alpha} + \delta < 2\Sigma(\varphi,0)$$ for some $\delta>0.$
    Therefore, by Lemma \ref{l2}, in a small neighborhood  $K$ of $0$ 
    
    \begin{eqnarray} \label{volume}    
   \lambda_{2n}(K\cap \{ \varphi <-N+1\}) \lesssim e^{-\left ( \dfrac{\alpha + n(2-\alpha)}{\alpha} + \delta \right) N} = e^{-( 1 + \delta )N} \cdot e^{-\left ( \dfrac{ n(2-\alpha)}{\alpha}\right) N}.  
   \end{eqnarray}

    The symbol $\lesssim$ means that left hand side is smaller than or equal to a constant time the right hand side, constant being independent from $N$.
    \par One may suppose that $u$ is defined on $ B_{2r} \subset K$ by taking smaller $r$ if necessary. By subtructing a constant from $u$, we may assume that $u\leq -1$. We will estimate the integral $$\int\limits_{\{ -N\leq \varphi < -N+1\}}^{}\psi_{N-1}(dd^cu)^n.$$
    Fix a smooth cut-off function $\chi$ with compact support in $B_{r-\rho} $, $0\leq \chi\leq 1$ and $\chi\equiv 1$ on $B_{r-2\rho}$ where $\rho<\frac{r}{4}$. By integration by part formula (see \cite{DNS}), we can split the integral by 
    
    \begin{eqnarray}\int\limits_{B_r}^{}\chi\psi_{N-1}(dd^cu)^n&=&-\int\limits_{B_r\setminus B_{r-3\rho}}^{}dd^c\chi\wedge \psi_{N-1}u(dd^cu)^{n-1}-\int\limits_{B_r\setminus B_{r-3\rho}}^{}d\chi\wedge \psi_{N-1}d^cu\wedge (dd^cu)^{n-1}\nonumber\\ &+& \int\limits_{B_r\setminus B_{r-3\rho}}^{}d^c\chi\wedge \psi_{N-1}du\wedge (dd^cu)^{n-1}+ \int\limits_{B_{r-\rho}}^{}\chi u dd^c\psi_{N-1}(dd^cu)^{n-1}.
    \end{eqnarray}
    Since $u$ is smooth in $B_r\setminus B_{r-3\rho}$ and supp $\psi_{N-1}\subset \{\varphi\leq -N+1\}$, it follows from \ref{volume} that the first three integrals on the right hand side is bounded above by  $e^{-( 1 + \delta )N} \cdot e^{-\left ( \dfrac{ n(2-\alpha)}{\alpha}\right) N}$.
The last integral can be written as $$\int\limits_{B_{r-\rho}}^{}\chi u_{\epsilon} dd^c\psi_{N-1}\wedge(dd^cu)^{n-1}+\int\limits_{B_{r-\rho}}^{}\chi (u-u_{\epsilon}) dd^c\psi_{N-1}\wedge(dd^cu)^{n-1}$$
where $u_{\epsilon}$ is the smooth regularization of $u$ satisfying $$||u-u_{\epsilon}||_{\infty}\leq \epsilon^{\alpha}, \; ||u_{\epsilon}||_{\mathcal C^2}\leq \epsilon^{\alpha-2}, \; \epsilon=e^{-(1/\alpha+c)N}, \; 0<c<\frac{\delta}{n(2-\alpha)}.$$ 
Since $(dd^cu)^{n-1}$ is locally moderate, second integral is bounded by $c||u-u_{\epsilon}||\lesssim e^{-(1+c\alpha)N}.$

To deal with the first integral, we use integration by part successively to move $dd^c$ from $u$ to $u_{\epsilon}$. All resulting integrals, except the last one  are bounded by $Ce^{-( 1 + \delta )N} \cdot e^{-\left ( \dfrac{ n(2-\alpha)}{\alpha}\right) N}$. The last one is estimated by $$\int\limits_{B_r\setminus \rho}^{}\chi\psi_{N-1}(dd^cu_{\epsilon})^n\lesssim (||u_{\epsilon}||_{\mathcal C^2})^n\lambda_{2n}(B_r\cap \{\varphi \leq -N+1\})\lesssim e^{(cn(2-\alpha)-(1+\delta))N}.$$ 
We now combine all these estimates in \ref{(1.1)}:

$$\displaystyle\int\limits_{}^{}e^{-\varphi}(dd^cu)^n\lesssim\sum_{n=0}^{\infty}e^{-\delta N-(n(2-\alpha)/\alpha)N}+e^{-c\alpha N}+e^{(cn(2-\alpha)-\delta)N}<\infty.$$
\end{proof}
As a corollary, we obtain the lower bound for $c_{\alpha}(\varphi,z)$ in Theorem \ref{main}.

\begin{corollary} \label{in1}
Let $\varphi$ be a psh function on a domain $\Omega \subset \mathbb{C}^n$ and $a\in \Omega$. Then we have  $$ c_{\alpha}(\varphi,a) \geq \dfrac{\alpha}{\alpha + n(2-\alpha)}\cdot \Sigma(\varphi,a)$$  
\end{corollary}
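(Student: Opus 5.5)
The corollary follows from Theorem \ref{t4} by rescaling $\varphi$ by a constant and checking how $\Sigma$ scales. Intersection numbers are homogeneous: for $c>0$ we have $(dd^c(c\varphi))^j = c^j (dd^c\varphi)^j$, hence
$$e_j(2c\varphi,a) = (2c)^j e_j(\varphi,a) \qquad (j=0,\dots,n).$$
Consequently each ratio $e_j/e_{j+1}$ is divided by $2c$, which gives
$$\Sigma(2c\varphi,a)=\frac{\Sigma(\varphi,a)}{2c}.$$
We also note that $2c\varphi$ stays in $\tilde{\mathcal E}$, and that the case $\Sigma(\varphi,a)=0$ is trivial since then the claimed bound is $c_\alpha\ge 0$. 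If some $e_{j+1}(\varphi,a)=0$, then $\Sigma=+\infty$ under the usual convention, and the argument below works for every $c>0$.

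Now fix any $c$ with
$$0\le c< \frac{\alpha}{\alpha+n(2-\alpha)}\,\Sigma(\varphi,a),$$
and put $\tilde\varphi = 2c\varphi$. For $c>0$ the choice of $c$ rewrites as
$$\frac{1}{\Sigma(\tilde\varphi,a)} = \frac{2c}{\Sigma(\varphi,a)} < \frac{2\alpha}{\alpha+n(2-\alpha)},$$
which is exactly the hypothesis of Theorem \ref{t4} for $\tilde\varphi$. The theorem then gives, for any $\alpha$-Hölder continuous psh function $u$, a compact neighborhood $K$ of $a$ with
$$\int_K e^{-2c\varphi}(dd^cu)^n = \int_K e^{-\tilde\varphi}(dd^cu)^n<\infty.$$
For $c=0$ finiteness is immediate because $(dd^cu)^n$ is locally finite. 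Hence every such $c$ lies in the set defining $c_\alpha(\varphi,a)$, and taking the supremum over $c$ yields the stated inequality.

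The only delicate point is the dependence of $K$ on $u$. The definition of $c_\alpha$ asks for finiteness on some compact neighborhood, and Theorem \ref{t4} supplies one for each $u$. The neighborhood comes from Lemma \ref{l2}, which depends only on $\varphi$, intersected with a ball on which $u$ is defined. Shrinking a neighborhood preserves finiteness, so one may fix $K$ small enough once and for all. The statement of the corollary is written for a general psh $\varphi$, whereas Theorem \ref{t4} needs $\varphi\in\tilde{\mathcal E}$ for $e_j(\varphi,a)$ to be defined. We therefore read the corollary under the standing assumption $\varphi\in\tilde{\mathcal E}(\Omega)$, exactly as in Theorem \ref{main}.
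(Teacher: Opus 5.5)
Your proposal is correct and follows the paper's argument: you rescale to $2c\varphi$ (the paper's $2\gamma\varphi$), use the homogeneity $\Sigma(\gamma\varphi,a)=\Sigma(\varphi,a)/\gamma$, and apply Theorem \ref{t4}. Your remarks on the degenerate cases, on choosing $K$ independently of $u$, and on the standing assumption $\varphi\in\tilde{\mathcal E}$ make explicit points the paper leaves implicit.
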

\begin{proof}

We note that $\Sigma(\varphi,a) = \gamma \Sigma(\gamma\varphi,a)$ for all $\gamma>0$ and for all $a\in \Omega$. Let $\gamma < \dfrac{\alpha}{\alpha + n(2-\alpha)}\Sigma(\varphi, a).$ Then $$\frac{1}{\Sigma(2\gamma\varphi,a)}=\frac{2\gamma}{\Sigma(\varphi,a)}<\frac{2\alpha}{\alpha+n(2-\alpha)}.$$ By Theorem \ref{t4} $     \displaystyle\int\limits_{K}^{} e^{-2\gamma\varphi}(dd^cu)^n < \infty$ for some neighborhood $K$ of $a$.  Thus $$ c_{\alpha}(\varphi,a) \geq \dfrac{\alpha}{\alpha + n(2-\alpha)}\cdot \Sigma(\varphi,a).$$
\end{proof}

\begin{remark} By Proposition 4.4 in \cite{Kf}, there exists a Monge-Amp\'ere  mass $\mu$ with $\alpha$-H\"{o}lder continuous psh potential such that $\int_K e^{-\varphi}d\mu=\infty$ for any psh $\varphi$ with $\nu(\varphi,0)>n\alpha$ where $K$ is any neighborhood of $0$. This implies that $$c_{\alpha}(\varphi,0) \leq \dfrac{n\alpha}{2\nu(\varphi,0)}.$$
Combining with Corollary \ref{in1} we obtain that $$\dfrac{\alpha}{\alpha + n(2-\alpha)}\cdot \Sigma(\varphi,a) \leq c_{\alpha}(\varphi,a) \leq \dfrac{n\alpha}{2\nu(\varphi,a)}$$  
for $a\in \Omega$.
\end{remark}
\section{Monge-Amp\'ere masses on level sets}
We will first estimate complex Monge-Amp\'ere masses of balls $B(0,r)$ then estimate that of the level sets $\{\varphi < -N\}$ for all $N\in \mathbb{N}$ and for all psh function $\varphi$ bounded below by some toric psh function. Therefore, to achieve our aim, we will focus initially on the following result:
\begin{proposition} \label{toric bdd}
 Let $f$, $\varphi$, $\nu$ and $K$ be as in Theorem \ref{main2}. Then there exists a constant $C>0$ such that $\varphi(z)\geq  \nu \max\{\log||z_j||:j=1,\cdots,n\} -C$  on $K$ where $\nu = \max\{\nu_{f,j}(0,1):\; \text{ for }j=1,\cdots,n\}   $. 
\end{proposition}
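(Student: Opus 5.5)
Since $\varphi\ge f$ on $K$, it is enough to prove the bound for $f$:
$$f(z)\ \ge\ \nu\max_j\log|z_j|-C \quad\text{on } K.$$
We use the toric structure. Write $f(z)=F(\log|z_1|,\dots,\log|z_n|)$, where $F$ is convex on $\{t\in\R^n: t_j<\log r\}$ and nondecreasing in each variable. Here $r$ is chosen so that the closed polydisc $\overline{\Delta}_r^n$ lies in $\Omega$; after shrinking, we may assume $K\subset\Delta_r^n$. Monotonicity holds because, for fixed other variables, $f$ is a radial subharmonic function of $z_j$, hence a convex nondecreasing function of $\log|z_j|$.

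\textbf{Step 1: the one–variable slopes.} For $J=\{j\}$ and $x=0$, $f_j$ is the restriction of $f$ to the $z_j$-axis. Since $f$ is toric, the supremum over $|z_j|=e^{t}$ is attained everywhere on the circle, so
$$V_{f,j}(0,t)=F(-\infty,\dots,t,\dots,-\infty)=:g_j(t).$$
Here $F$ is extended to $-\infty$ coordinates by monotone limits. The function $g_j$ is convex and nondecreasing, and by definition $\nu_{f,j}(0,1)=\lim_{t\to-\infty}g_j(t)/t$. By convexity, $g_j(t)-g_j(t_0)\ge \nu_{f,j}(0,1)\,(t-t_0)$ for $t\le t_0$. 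Hence
$$g_j(t)\ \ge\ \nu_{f,j}(0,1)\,t-C_j\ \ge\ \nu\, t - C_j \qquad (t\le \log r),$$
where the last inequality uses $t<0$ (take $r<1$) and $\nu\ge\nu_{f,j}(0,1)$. The hypothesis $\nu_{f,j}(0,1)>0$ guarantees these slopes are finite and positive. Also, $g_j(t)>-\infty$ because the singularity at $0$ is isolated, so $f$ is not identically $-\infty$ on the punctured axis.

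\textbf{Step 2: pass from axes to all of $K$.} Given $z\in K$, choose $j$ with $|z_j|=\max_k|z_k|$. Monotonicity of $F$ in the other variables gives
$$f(z)=F(\log|z_1|,\dots,\log|z_n|)\ \ge\ F(-\infty,\dots,\log|z_j|,\dots,-\infty)=g_j(\log|z_j|)\ \ge\ \nu\log|z_j|-C_j.$$
Since $\log|z_j|=\max_k\log|z_k|$, setting $C=\max_j C_j$ gives the claim for $f$, and hence for $\varphi$.

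\textbf{Main obstacle.} The delicate step is justifying the monotone comparison $F(t)\ge F(-\infty,\dots,t_j,\dots,-\infty)$, i.e. that lowering the other moduli does not increase $f$. I would prove it by applying the maximum principle to the subharmonic function $\zeta\mapsto f(z_1,\dots,\zeta,\dots,z_n)$ on the disc $|\zeta|\le |z_k|$. It is radial, so its value at the centre is at most its value on the circle of radius $|z_k|$. Iterating this over $k\neq j$ sends all other coordinates to $0$, giving $f(z)\ge f(0,\dots,z_j,\dots,0)=g_j(\log|z_j|)$.

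This also avoids any issue with taking limits at $-\infty$: the comparison is made directly with the values of $f$ on the coordinate axes, where $V_{f,j}$ is computed. If $K$ is not contained in a polydisc inside $\Omega$, I would first shrink $K$. The lower bound on the part of $K$ away from $0$ is automatic, since $f$ is locally bounded below off its isolated singularity, so only a constant needs adjusting there.
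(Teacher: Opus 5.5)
There is a genuine gap in Step~1, not in the monotonicity step you flagged. For convex $g_j$, the quotients $\frac{g_j(t_0)-g_j(t)}{t_0-t}$ decrease to $s_j:=\nu_{f,j}(0,1)$ as $t\to-\infty$, so they are $\ge s_j$. This gives
\[
g_j(t)\le g_j(t_0)+s_j\,(t-t_0)\qquad(t\le t_0),
\]
which is the standard \emph{upper} bound $f_j\le \nu(f_j,0)\log|z_j|+O(1)$. Your inequality has the direction reversed. A lower bound with the exact slope $s_j$ is false in general: $g(t)=t-\log(-t)$ is convex and increasing on $(-\infty,0)$ with $g(t)/t\to 1$, but $g(t)-t\to-\infty$.

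In fact the statement itself fails. Take $\varphi=f=\log\|z\|-\log(-\log\|z\|)$ on the unit ball. It is toric, psh, singular only at $0$, and has $\nu_{f,j}(0,1)=1$ for all $j$, so $\nu=1$. Yet on the $z_1$-axis, $\varphi-\max_j\log|z_j|=-\log(-\log|z_1|)\to-\infty$. The paper's proof makes the same slip: the fact that $f_j(z_j)/\log\|z_j\|$ decreases to the Lelong number only yields $f_j\le \nu_{f,j}(0,1)\log\|z_j\|$.

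Once Step~1 is corrected, your method proves the version with an $\epsilon$ of room: for every $\epsilon>0$ there is $C_\epsilon$ with $\varphi\ge(\nu+\epsilon)\max_j\log|z_j|-C_\epsilon$ on $K$.
\begin{itemize}
\item Since $g_j(t)/t\to s_j$ and $t<0$, you get $g_j(t)\ge(s_j+\epsilon)t\ge(\nu+\epsilon)t$ for $t\le T_\epsilon$.
\item For $T_\epsilon\le t\le\log r$, monotonicity gives $g_j(t)\ge g_j(T_\epsilon)>-\infty$.
\end{itemize}
Equivalently, use the supporting line of $g_j$ at a point where its left derivative is at most $s_j+\epsilon$. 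The supporting-line slope can exceed $s_j$ at every point, which is exactly where the exact slope is lost.

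Your Step~2, via the sub-mean value property of the radial slices, then goes through verbatim. It is a cleaner justification of $f(z)\ge f(0,\dots,z_j,\dots,0)$ than the paper's ``increasing in each variable''.

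This weaker bound suffices for Theorem~\ref{main2}, because $\nu<\alpha n$ is strict. Choose $\epsilon$ with $\nu+\epsilon<\alpha n$ and run Lemmas~\ref{level set inc} and~\ref{sublevel} with $\nu+\epsilon$ in place of $\nu$. The sharp form with $\nu$ itself needs an extra hypothesis, e.g.\ that $g_j(t)-s_jt$ is bounded below for each $j$.
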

\begin{proof}
   Consider $z=(z_1,\cdots, z_n)\in \Omega$, $y=(y_1,\cdots, y_n ) \in \mathbb{R}^n_+$ and one point subset $J=\{ j\} \subset \{1,\cdots,n\}$ for $j=1,\cdots,n$. The functions $f_j(z_j):=f|_{\Omega_j}$ are psh on the set $$\Omega_j:=\Omega\cap \{z_i=0, \; \text{for}\; i\neq j \}.$$ By  our assumption $f_j$ has isolated singularity at $0\in \Omega_j$. As $f_j$ is upper semicontinuous on $\Omega$ and has isolated singularity at $0\in \Omega_j$, we may assume that $f_j(z_j)\leq 0$ near $0$. Then the partial like refined Lelong numbers $\nu_{f,j}(0,y_j)$ become 
   $$ \nu_{f,j}(0,y_j) = \displaystyle\lim_{t\rightarrow -\infty} \dfrac{ V_{f,j}(0,ty_j) }{t}=  \displaystyle\lim_{t\rightarrow -\infty} \dfrac{ \sup\{ f_j(z_j) : |z_j|=e^{ty_j}\} }{t}.     $$ 
   As $f_j(z_j)$ is toric psh function on $\Omega_j$, we have 
   $$ \nu_{f,j}(0,y_j) =  \displaystyle\lim_{t\rightarrow -\infty} \dfrac{ f_j(e^{ty_j}) }{t}.  $$
   If we set $t= \dfrac{\text{log}(||z_j||)}{y_j}$, then we obtain the following equality
   $$  0<\dfrac{ \nu_{f,j}(0,y_j) }{y_j}  = \displaystyle\lim_{||z_j||\rightarrow 0} \dfrac{ f_j(z_j) }{\log(||z_j||)}. $$ 
   Note that  $f_j$ has isolated singularity at $0$  and the above limit is equal to the Lelong number $\nu(f_j,0)$ of $f_j$ at $0$. Thus the ratio $\dfrac{ \nu_{f,j}(0,y_j) }{y_j}$ is equal to $\nu(f_j,0)$ and hence independent of $y_j$.  Since $\dfrac{ f_j(z_j) }{\text{log}(||z_j||)} $ decreases to the Lelong number $\nu(f_j,0)=\nu_{f,j}(0,1)$ for $j=1,\cdots,n$ there exists positive constant $C_j$ such that $$f_j(z_j) \geq \nu_{f,j}(0,1)\text{log}(||z_j||) - C_j$$ for $j=1,\cdots,n$, $||z_j||\leq r_j$ and $0<r_j<<1$. As $f(z)$ is a toric psh function on $\Omega$, $f(z)$ is increasing in each variable on $\mathbb{R}^n_+$. Therefore, $\forall z=(z_1,\cdots,z_n) \in K$ $$ \varphi \geq f(z)\geq f_j(z_j) \geq \nu_{f,j}(0,1)\text{log}(||z_j||)-C_j \geq \nu \text{log}(||z_j||)-C $$ where $C=\text{max}_j(C_j)$ and $\nu=\text{max}_j(\nu_{f,j}(0,1))$.  
\end{proof}
\par We note that there are plenty of examples of psh functions which satisfy the hypothesis of Proposition \ref{toric bdd}: 
\begin{example}\cite[ Example 2.4]{DK}
  Let $\varphi(z)= \displaystyle\max_{j\in J}\log\Bigg(\sum_{k\in K}\prod_{l \in L}|f_{j,k,l}(z)|^{a_{j,k,l}}\Bigg)$ where $ f_{j,k,l}(z) $ are holomorphic functions and $ a_{j,k,l} \in \mathbb{R}_{>0}$ with the sets $J,K,L$ of indices $j,k,l$ being finite. Then $\varphi(z)$ is a psh function and  $e^{\varphi(z)}$ is locally Hölder continuous. By Proposition 2.5 in  \cite{KR}, we have $\varphi(z)\geq \lambda\log|z| + O(1) \text{ for } \lambda > 0 $ where $|z|= |z_1| + \cdots + |z_n|$.
\end{example}
\begin{lemma} \label{level set inc}
   Let $f$, $\varphi$, $\nu$ and $K$ be as in Theorem \ref{main2}. Then given any $N \in \mathbb{N}$  the set $\{\varphi(z) <-N\}\cap K$ is contained in the ball $B(0, Mr(N))$ where $M=\sqrt{n}e^{\frac{C}{\nu}}$ and $ r(N)=  e^{-\frac{N}{\nu}} $. 
\end{lemma}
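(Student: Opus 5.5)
The plan is to derive the inclusion directly from the lower bound in Proposition \ref{toric bdd}. That proposition gives a constant $C>0$ with
$$\varphi(z)\geq \nu\max_{1\le j\le n}\log\|z_j\| - C \quad\text{on } K.$$
Let $z\in K$ with $\varphi(z)<-N$. Combining the two inequalities gives $\nu\max_j\log\|z_j\| - C < -N$. Since $\nu>0$ (each $\nu_{f,j}(0,1)>0$ by hypothesis), we may divide by $\nu$ to get
$$\max_j \log\|z_j\| < \frac{C-N}{\nu}.$$
Hence $|z_j|< e^{C/\nu}e^{-N/\nu}$ for every $j=1,\dots,n$.

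Next we pass from the coordinatewise bound to a bound on the Euclidean norm. Using $\|z\|^2=\sum_{j=1}^n|z_j|^2 < n\,e^{2C/\nu}e^{-2N/\nu}$, we obtain
$$\|z\|<\sqrt n\,e^{C/\nu}e^{-N/\nu}=M\,r(N).$$
This means $z\in B(0,Mr(N))$, which is the claimed inclusion $\{\varphi<-N\}\cap K\subset B(0,Mr(N))$. The strict inequality is kept throughout, so the open ball suffices.

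There is no genuine obstacle here. The only points needing care are the bookkeeping ones: the constant $C$ must be the one from Proposition \ref{toric bdd}, and we must have $\nu>0$ so that the division preserves the inequality. If one wants the statement for every $N\in\mathbb N$ without shrinking $K$, nothing more is needed, because Proposition \ref{toric bdd} already holds on all of $K$. Points with some $z_j=0$ cause no trouble: there $\log|z_j|=-\infty$, which only makes the coordinate bound easier.
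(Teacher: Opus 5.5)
Your proof is correct and is essentially the paper's argument: the paper also starts from the bound of Proposition \ref{toric bdd} and uses $\|z\|^2\le n\max_j|z_j|^2$. It first rewrites that bound as $\varphi(z)\ge \nu\log\|z\|-C-\frac{\nu}{2}\log n$ on $K$ and then reads off the inclusion, whereas you bound each coordinate first and then sum, which gives the same constant $M=\sqrt n\,e^{C/\nu}$.
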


\begin{proof}
   By Proposition \ref{toric bdd}  we have that $$  \nu \displaystyle\max_j(\log||z_j||)-C \leq \varphi(z) $$ for $z\in K$. We note that, as $\log||z||$ is an increasing function, 
   $$ \varphi(z)\geq \nu \displaystyle\max_j(\log||z_j||)-C =  \displaystyle\frac{\nu}{2}\log(\frac{1}{n}n\max_j||z_j||^2)-C  $$ on $K$ which implies that $$ \varphi(z)\geq -\frac{\nu \log n}{2}-C +  \frac{\nu}{2}\log\displaystyle\sum_{j=1}^{n} ||z_j||^2   =-\frac{\nu \log n}{2}-C+\nu\log ||z||.$$  Thus we obtain the result 
   $$ \{\varphi(z) <-N\}\cap K \subset B(0, Mr(N)).$$ 
\end{proof}
\begin{lemma}\label{mass}
    Let $u$ be an $\alpha$-Hölder continuous psh function for some $\alpha$ with $0<\alpha \leq 1$. Then we have the following estimation of the complex Monge-Amp\'ere masses of balls $$ (dd^cu)^n(B(0,r)) \leq Mr^{\alpha n}$$ for some constant $M>0$ and for all $r>0$.
\end{lemma}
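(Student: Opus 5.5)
The plan is the standard Chern--Levine--Nirenberg type estimate, with the Hölder regularity of $u$ converted into a gain of $r^\alpha$ at each of the $n$ steps. The statement is local: $u$ is defined near $0$, so we work with $0<r\le r_0$, where $B(0,2r_0)$ lies in the domain. For $r\ge r_0$ the total mass on a fixed compact set is finite, so $Mr^{\alpha n}$ holds trivially after enlarging $M$. Fix $r\le r_0$. Since adding a constant does not change $(dd^cu)^n$, we replace $u$ by $v=u-u(0)$. Hölder continuity gives $|v|\le C(2r)^\alpha$ on $B(0,2r)$.

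The key step is a scale-invariant bound for one $dd^c$ against a cutoff. Take $\chi_r(z)=\chi(z/r)$ with $\chi$ a fixed smooth cutoff, $\chi\equiv1$ on $B(0,1)$ and supported in $B(0,2)$. Then $0\le\chi_r\le1$ and $|dd^c\chi_r|\lesssim r^{-2}\omega$, where $\omega=dd^c\|z\|^2$. For a closed positive current $T$ of bidegree $(n-k,n-k)$, integration by parts (justified by regularizing $v$ and using Bedford--Taylor continuity) gives
\[
\int\chi_r\,dd^cv\wedge T\wedge\omega^{k-1}=\int v\,dd^c\chi_r\wedge T\wedge\omega^{k-1}\lesssim r^{\alpha}r^{-2}\int_{B(0,2r)}T\wedge\omega^{k}.
\]
To iterate, we need a nested family of radii, say $\chi^{(j)}_r$ equal to $1$ on $B(0,2^{j}r)$ and supported in $B(0,2^{j+1}r)$. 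We also need the Hölder bound on the larger ball $B(0,2^{n+1}r)$, which only changes constants.

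Starting from $(dd^cu)^n(B(0,r))\le\int\chi^{(0)}_r(dd^cv)^n$, we move one $dd^c$ at a time from $v$ onto the cutoff. Each step trades one factor $dd^cv$ for one factor $\omega$ and costs a factor $r^{\alpha-2}$, while enlarging the ball by a factor of $2$. After $n$ steps we obtain
\[
(dd^cu)^n(B(0,r))\lesssim (r^{\alpha-2})^n\int_{B(0,2^{n+1}r)}\omega^n\lesssim r^{n(\alpha-2)}r^{2n}=r^{\alpha n}.
\]
The constant depends only on $n$, $\chi$ and the Hölder constant of $u$.

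The main obstacle is the bookkeeping in the integration by parts. At each stage $v$ must be re-centred only once, since it is the same function throughout; its sup-norm on the largest ball, $\lesssim r^\alpha$, is what controls every step. We also need positivity to pass from $|dd^c\chi_r|\lesssim r^{-2}\omega$ to the inequality of measures $dd^c\chi_r\wedge T\wedge\omega^{k-1}\le Cr^{-2}\,T\wedge\omega^k$. This is fine because $T\wedge\omega^{k-1}$ is positive and $Cr^{-2}\omega\pm dd^c\chi_r\ge0$. With $|v|$ pulled out in sup-norm, the iteration closes.
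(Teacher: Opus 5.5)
Your proof is correct and is essentially the paper's argument: the paper rescales to $u_r(z)=r^{-\alpha}(u(rz)-u(0))$, notes $\|u_r\|_{L^\infty}\le\tilde M$, and applies the Chern--Levine--Nirenberg estimate as a black box to get the factor $r^{\alpha n}$, while you carry out the same CLN integration by parts by hand with cutoffs $\chi(z/2^j r)$, which is that computation in unscaled coordinates. Your nested radii also make explicit a point the paper glosses over: CLN needs the sup-norm bound on a ball slightly larger than the one whose mass is being estimated, and it produces a constant $C\tilde M^n$ rather than $\tilde M^n$.
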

\begin{proof}
    We set $B_r:=B(0,r)$ and $B:=B(0,1)$. We define the following function: $$  u_r(z)=\frac{1}{r^{\alpha}} \Big(u(rz)-u(0)\Big) $$ on $B$ for $0<r<<1$. As $u$ is $\alpha$-Hölder psh function on $B_r$, there exists $\tilde M>0$ such that 
    $$ |u(rz)-u(0)|\leq \tilde M||rz||^{\alpha}$$ which implies that 
    $$ |u_r(z)|\leq \tilde M||z||^{\alpha} $$ for $z\in B.$ 
    Thus $ ||u_r||_{L^{\infty}}\leq \tilde M.$ 
Let $\sigma:B\longrightarrow B_r$ be the biholomorphism given by $\sigma(z) = rz$. We now have 
    \begin{align*}
  (dd^cu)^n(B_r) &=  \displaystyle\int_{B_r}(dd^cu)^n = \displaystyle\int_{B}\sigma^*(dd^cu)^n \\ &= \displaystyle\int_{B}\Big(dd^c(u\circ \sigma )\Big)^n = \displaystyle\int_{B}\Big(dd^cu(rz)\Big)^n \\ 
 &= \displaystyle\int_{B}\Big(dd^c\big[r^{\alpha}u(rz)+u(0)\big]\Big)^n \\
  &= \displaystyle\int_{B}\Big(dd^c\big(r^{\alpha}u_r(z)\big)\Big)^n =  r^{\alpha n}\displaystyle\int_{B}\Big(dd^cu_r(z)\Big)^n  \leq r^{\alpha n}\tilde M^n.\end{align*}
 The last inequality follows from Chern-Levine-Nirenberg estimate (Corollary 3.4.8 in \cite{Kl}).
\end{proof}
 Now we give an estimate for the Monge-Amp\'ere measure of level sets.
\begin{lemma}\label{sublevel}
    Let $f$, $\varphi$, $\nu$ and $K$ be as in Theorem \ref{main2}. Then we have $$ \displaystyle (dd^cu)^n (\{\varphi <-N \}\cap K)  \leq M\cdot r(N)^{\alpha n}$$ where $N$ is any natural number and M is positive constant which does not depend on $N$, and $r(N)=e^{-\frac{N}{ \nu}}$. 
\end{lemma}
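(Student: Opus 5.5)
The plan is to combine Lemma \ref{level set inc} with Lemma \ref{mass}, using monotonicity of the positive measure $(dd^cu)^n$ under inclusion of Borel sets.

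First, Lemma \ref{level set inc} gives, for every $N\in\mathbb{N}$,
$$\{\varphi<-N\}\cap K\subset B\big(0,M_1 r(N)\big),\qquad M_1=\sqrt{n}\,e^{C/\nu},\quad r(N)=e^{-N/\nu}.$$
Here $C$ is the constant from Proposition \ref{toric bdd}, which depends only on $f$ and $K$ and not on $N$. Since $(dd^cu)^n$ is a positive Borel measure, it follows that
$$(dd^cu)^n\big(\{\varphi<-N\}\cap K\big)\le (dd^cu)^n\Big(B\big(0,M_1r(N)\big)\Big).$$

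Next, I apply Lemma \ref{mass} with radius $M_1 r(N)$. This gives the bound $M_2\,(M_1 r(N))^{\alpha n}=M_2M_1^{\alpha n}\,r(N)^{\alpha n}$, where $M_2$ is the constant from Lemma \ref{mass}. The statement then holds with $M:=M_2M_1^{\alpha n}$, which is independent of $N$.

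The main point needing care is that Lemma \ref{mass} is really proved only for small radii $0<r\ll 1$, with $B_r$ inside the domain where $u$ is $\alpha$-Hölder. I handle this by splitting into two cases.

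\emph{Large $N$.} Choose $N_0$ so that $M_1r(N)\le r_0$ for all $N\ge N_0$, where $r_0$ is an admissible radius for Lemma \ref{mass}, small enough that $B(0,2r_0)\Subset\Omega$. For these $N$ the estimate above applies directly.

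\emph{Small $N$.} There are only finitely many $N<N_0$. For each of them, $\{\varphi<-N\}\cap K\subset K$ and $(dd^cu)^n(K)<\infty$ by the Chern--Levine--Nirenberg inequality, since $u$ is continuous. Also $r(N)^{\alpha n}\ge r(N_0)^{\alpha n}>0$. So enlarging $M$ by the factor $(dd^cu)^n(K)/r(N_0)^{\alpha n}$ covers these cases.

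If $N=0$ is meant to be included, it is covered by the same finite-case argument. Taking the larger of the two constants gives the uniform bound.
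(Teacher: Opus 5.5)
Your proposal is correct and takes the same approach as the paper. The paper proves the lemma by combining the inclusion of Lemma \ref{level set inc} with the ball estimate of Lemma \ref{mass} and absorbing the factor $M_1^{\alpha n}$ into the constant. Your separate treatment of the finitely many small $N$, using $(dd^cu)^n(K)<\infty$, makes rigorous a point the paper glosses over: it applies Lemma \ref{mass} to radii that need not be small, even though that lemma's proof only covers $0<r\ll 1$.
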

\begin{proof}
    Combining Lemma \ref{level set inc} with Lemma \ref{mass} , we obtain  that $$ (dd^cu)^n((\varphi<-N)\cap K)  \leq (dd^cu)^n(B(0,M'e^{\frac{-N}{\nu}}))\leq Me^{\frac{-n\alpha N}{\nu}}.$$ 
\end{proof}

Now we prove our second main result on integrability of  psh functions.

\begin{proof}[Proof of Theorem \ref{main2}]
We have
     \begin{align} \label{(3.1)}
  \displaystyle\int\limits_{K}^{}e^{-\varphi}(dd^cu)^n &= \displaystyle\sum_{N=0}^{\infty}\int\limits_{\{ -N\leq \varphi < -N+1\}\cap K}^{}e^{-\varphi}(dd^cu)^n \nonumber \\ & \leq\displaystyle\sum_{N=0}^{\infty} e^N \int\limits_{\{ -N\leq \varphi < -N+1\}\cap K}^{}(dd^cu)^n \nonumber \\ & \leq  \displaystyle\sum_{N=0}^{\infty} e^NMe^{-\frac{\alpha n N}{\nu}}\nonumber \\ & = M\displaystyle\sum_{N=0}^{\infty} e^{\Big(1-\frac{\alpha n}{ \nu}\Big)N}. 
  \end{align} 
Second inequality above follows from  Lemma \ref{sublevel} . By assumption 
  $ 1-\frac{\alpha n}{\nu} $ is negative and hence the last sum  (\ref{(3.1)}) is convergent.
  \end{proof}

  As a corollary, we improve the lower bound for $\alpha$-Hölder integrability exponent of toric psh functions. We note that Example \ref{ex5} shows that this lower bound is sharp.
\begin{corollary} Let $f$, $\varphi$, $\nu$ and $K$ be as in Theorem \ref{main2}. Then $$c_{\alpha}(\varphi,0)\geq \frac{\alpha n}{2\nu}.$$
    
\end{corollary}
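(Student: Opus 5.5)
The plan is a scaling argument: rescale $\varphi$ by a constant $2c$ and apply Theorem \ref{main2} to the rescaled function. By the definition of $c_{\alpha}(\varphi,0)$, it suffices to prove that for every $c$ with $0\le c<\frac{\alpha n}{2\nu}$ and every $\alpha$-Hölder continuous psh $u$, the integral $\int_K e^{-2c\varphi}(dd^cu)^n$ is finite for some compact neighborhood $K$ of $0$. The case $c=0$ is immediate, since $(dd^cu)^n$ is a Radon measure and so has finite mass on compacts. So fix $0<c<\frac{\alpha n}{2\nu}$ and set $\tilde\varphi=2c\varphi$ and $\tilde f=2cf$.

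Next, check that the pair $(\tilde f,\tilde\varphi)$ satisfies the hypotheses of Theorem \ref{main2}.
\begin{itemize}
\item $\tilde f$ is toric psh, since multiplying by a positive constant preserves both properties, and it still has an isolated singularity at $0$.
\item $\tilde\varphi\ge\tilde f$ on $K$, because $c>0$.
\item The partial like refined Lelong numbers scale linearly: $V_{\tilde f,j}(0,ty_j)=2c\,V_{f,j}(0,ty_j)$, hence $\nu_{\tilde f,j}(0,1)=2c\,\nu_{f,j}(0,1)>0$. Therefore $\tilde\nu=\max_j\nu_{\tilde f,j}(0,1)=2c\nu$.
\end{itemize}
The assumption $c<\frac{\alpha n}{2\nu}$ is exactly $\tilde\nu<\alpha n$, so Theorem \ref{main2} gives $\int_K e^{-\tilde\varphi}(dd^cu)^n=\int_K e^{-2c\varphi}(dd^cu)^n<\infty$.

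Since $u$ was an arbitrary $\alpha$-Hölder continuous psh function, every such $c$ is admissible in the supremum defining $c_\alpha(\varphi,0)$, and letting $c\uparrow \frac{\alpha n}{2\nu}$ gives the bound. The only step needing care is that $K$ may depend on $c$. That is harmless, because $c_\alpha$ is a local quantity at $0$. Alternatively, $K$ can be taken as the fixed neighborhood from the hypotheses, since the argument through Lemma \ref{sublevel} uses only the inequality $\varphi\ge f$ on $K$.

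One could instead redo the summation in the proof of Theorem \ref{main2} directly with $e^{-2c\varphi}$. The level-set decomposition then gives $\sum_N e^{2cN}Me^{-\alpha nN/\nu}$, which converges under the same condition. This serves as a consistency check on the scaling argument.
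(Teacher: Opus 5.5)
Your proposal is correct and is essentially the paper's own argument: the paper takes $\beta<\alpha n$ and applies Theorem~\ref{main2} to the rescaled pair $\frac{\beta}{\nu}f\le\frac{\beta}{\nu}\varphi$, which is your scaling with $2c=\beta/\nu$. Your explicit check that the partial like refined Lelong numbers scale linearly, giving $\tilde\nu=2c\nu$, fills in a step the paper only asserts in passing.
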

\begin{proof} Take any $\beta<\alpha n$. Then $\nu\left(\frac{\beta f}{\nu}\right)=\beta<\alpha n. $ By Theorem \ref{main2} $$\displaystyle\int_Ke^{\frac{-\beta\varphi}{\nu}}(dd^c u)^n<\infty.$$ Thus $c_{\alpha}(\varphi,0)\geq \frac{\alpha n}{2\nu}.$   
\end{proof}
\section{A family of Monge-Amp\'ere Measures}
Let $\mathcal{F}_m^1(B)$ be the set of Hausdorff-Riesz measures of order $m$ as defined in \cite[Section 2]{Z} and $\text{Dir}(B,0,d\mu)$ be the Dirichlet problem where $\mu \in \mathcal{F}_m^1(B)$. If $m=2n-2 + \epsilon$ for $0<\epsilon \leq 2$, then, by \cite[Theorem 1.2]{Ch}, there exists a unique solution $u$ of the Dirichlet problem $\text{Dir}(B,0,d\mu)$ where $u $ is the $\alpha=\frac{\epsilon^2\gamma}{\epsilon +2}$-Hölder continuous psh on $\bar{B}$, for any $0<\gamma<\frac{1}{2n+1}$. Therefore, we will consider the following family of Hölder continuous psh functions

$$ \mathcal{U}_{\epsilon} =\{u \mid (dd^cu)^n=d\mu \text{ for some } \mu \in \mathcal{F}^1_{2n-2+\epsilon}  \}  $$ where $0<\epsilon \leq 2$ and $u$ is the unique solution to the Dirichlet problem $\text{Dir}(B,0,d\mu)$. For this family $\mathcal{U}_{\epsilon}$, we will look for a better integrability condition for the psh functions, bounded below by a toric psh function with isolated singularity at the origin when compared with the Theorem \ref{main2}.
\par Let $f$, $\varphi$, $\nu$ and $K$ be as in Theorem \ref{main2}, $M$ and $r(N)$ be as in Lemma \ref{level set inc},  and $u$ be in the family $\mathcal{U}_{\epsilon}$. We have the following:

\begin{align*} \label{(4.1)}
  \displaystyle\int\limits_{K}^{}e^{-\varphi}(dd^cu)^n &= \displaystyle\sum_{N=0}^{\infty}\int\limits_{\{ -N\leq \varphi < -N+1\}\cap K}^{}e^{-\varphi}(dd^cu)^n \nonumber \\ & \leq\displaystyle\sum_{N=0}^{\infty} e^N \int\limits_{\{ -N\leq \varphi < -N+1\}\cap K}^{}(dd^cu)^n \nonumber 
 \\& \leq  \displaystyle\sum_{N=0}^{\infty} e^N \mu\big( B(0,Mr(N-1) ) \cap K \big )  \nonumber  \\ & \leq  \displaystyle\sum_{N=0}^{\infty} e^N\big( Mr(N-1)\big )^{2n-2+\epsilon}  \text{ as } \mu \in \mathcal{F}^1_{2n-2 + \epsilon} \nonumber     \\ & = M'\displaystyle\sum_{N=0}^{\infty} e^{\Big(1-\frac{2n-2+\epsilon}{ \nu}\Big)N}. 
  \end{align*} 
 The second inequality above follows from Lemma \ref{level set inc}. The last sum  is convergent if and only if $\nu < 2n-2+\epsilon$. Thus we have the following Theorem: 
  \begin{theorem}\label{ex}
      Let $f$, $\varphi$, $\nu$, $K$ and $\mathcal{U}_{\epsilon}$ be as in above, and $u$ be in the family $\mathcal{U}_{\epsilon}$. If we assume that $\nu <2n-2+\epsilon$, then the integral $$   \displaystyle\int\limits_{K}^{}e^{-\varphi}(dd^cu)^n $$ is finite. 
  \end{theorem}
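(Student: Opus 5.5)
The argument is the dyadic level-set decomposition used for Theorem \ref{main2}, with Lemma \ref{mass} replaced by the Hausdorff--Riesz growth condition on $\mu$.

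First, fix $u\in\mathcal U_\epsilon$, so that $(dd^cu)^n=\mu$ with $\mu\in\mathcal F^1_{2n-2+\epsilon}(B)$. By the definition in \cite[Section 2]{Z}, there is a constant $C_\mu$ with
$$\mu(B(z,r))\le C_\mu r^{2n-2+\epsilon}$$
for all balls. We may subtract a constant from $\varphi$ (this only changes the integral by a finite factor) and shrink $K$, so that we can assume $\varphi<0$ on $K$.

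Next, decompose $K$ into the shells $\{-N\le\varphi<-N+1\}\cap K$ for $N\ge 1$. On the $N$-th shell we have $e^{-\varphi}\le e^N$, so
$$\int_K e^{-\varphi}(dd^cu)^n\le\sum_{N\ge1}e^N\,\mu\bigl(\{\varphi<-N+1\}\cap K\bigr).$$

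Then apply Lemma \ref{level set inc} with $N-1$ in place of $N$. It gives
$$\{\varphi<-(N-1)\}\cap K\subset B\bigl(0,Mr(N-1)\bigr),\qquad r(N-1)=e^{-(N-1)/\nu}.$$
Combining this with the growth bound on $\mu$ yields
$$\mu\bigl(\{\varphi<-N+1\}\cap K\bigr)\le C_\mu M^{2n-2+\epsilon}e^{(2n-2+\epsilon)/\nu}\,e^{-(2n-2+\epsilon)N/\nu}.$$
The series is therefore dominated by a constant times the geometric series
$$\sum_N e^{\left(1-\frac{2n-2+\epsilon}{\nu}\right)N},$$
which converges exactly when $\nu<2n-2+\epsilon$. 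That is the stated hypothesis.

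The main point to check is that the ball growth bound can legitimately be applied to $\mu$ restricted to $K$. The constant in the Riesz condition must be uniform, and the balls $B(0,Mr(N-1))$ lie inside the domain $B$ of the Dirichlet problem for all large $N$. For large $N$ the radius tends to zero, so after shrinking $K$ into $B$ this holds. The finitely many small $N$ contribute at most $e^N\mu(K)<\infty$ each, since $\mu$ is a finite measure.

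We never use the Hölder exponent of $u$, so the improvement over Theorem \ref{main2} comes entirely from the exponent $2n-2+\epsilon$ replacing $\alpha n$.
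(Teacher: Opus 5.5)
Your proposal is correct and is essentially the paper's own proof: decompose $K$ into the shells $\{-N\le\varphi<-N+1\}$, bound $e^{-\varphi}\le e^N$, use Lemma \ref{level set inc} to place $\{\varphi<-N+1\}\cap K$ inside $B(0,Mr(N-1))$, apply the Hausdorff--Riesz growth bound $\mu(B(0,\rho))\lesssim\rho^{2n-2+\epsilon}$, and sum the geometric series $\sum_N e^{(1-\frac{2n-2+\epsilon}{\nu})N}$. One caveat applies equally to the paper: the lower bound behind Lemma \ref{level set inc} (Proposition \ref{toric bdd}) only holds in the form $\varphi\ge(\nu+\eta)\log\|z\|-C_\eta$ for each $\eta>0$ (for instance $f=\log\|z\|-\log(-\log\|z\|)$ violates it with $\eta=0$), but because your hypothesis $\nu<2n-2+\epsilon$ is strict you can choose $\eta$ with $\nu+\eta<2n-2+\epsilon$, and then the same computation goes through unchanged.
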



\noindent \textbf{Acknowledgments.} The  authors are supported by T\"UB\.ITAK 1001 Proj. No 124F144.

	\end{document}